\title{A Gr\"obner Basis for the Secant Ideal of the Second Hypersimplex}
\author{ Seth Sullivant}
\address{Department of Mathematics \\ North Carolina State University, Raleigh, NC 27695}
\email{smsulli2@ncsu.edu}
\date{}
\theoremstyle{plain}
\newtheorem{thm}{Theorem}[section]
\newtheorem{lemma}[thm]{Lemma}
\newtheorem{prop}[thm]{Proposition}
\newtheorem{cor}[thm]{Corollary}
\theoremstyle{definition}
\newtheorem{defn}[thm]{Definition}
\newtheorem{ex}[thm]{Example}
\theoremstyle{remark}
\newcommand{\zz}{\mathbb{Z}}
\newcommand{\nn}{\mathbb{N}}
\newcommand{\pp}{\mathbb{P}}
\newcommand{\cc}{\mathbb{C}}
\newcommand{\kk}{\mathbb{K}}
\newcommand{\bfa}{\mathbf{a}}
\newcommand{\bfi}{\mathbf{i}}
\newcommand{\bfj}{\mathbf{j}}
\newcommand{\bfu}{\mathbf{u}}
\newcommand{\bfv}{\mathbf{v}}
\newcommand{\bfx}{\mathbf{x}}
\newcommand{\conv}{\mathrm{conv}}
\newcommand{\ind}{\mbox{$\perp \kern-5.5pt \perp$}}
\begin{document}
\maketitle

\begin{abstract}
We determine a Gr\"obner basis for the secant ideal of the toric ideal associated to the second hypersimplex $\Delta(2,n)$, with respect to any circular term order.  The Gr\"obner basis of the secant ideal requires polynomials of odd degree up to $n$.  This shows that the circular term order is $2$-delightful, resolving a conjecture of Drton, Sturmfels, and the author.
The proof uses Gr\"obner degenerations for secant ideals, combinatorial characterizations of the secant ideals of monomial ideals, and the relations between secant ideals and prolongations.
\end{abstract}


\section{Introduction}

If $X \subset  \pp^{m-1}$ is a projective variety, its $r$th secant variety $X^{\{r\}} \subseteq \pp^{m-1}$ is the closure of the  union of all planes in $\pp^{m-1}$ spanned by $r$ points in $X$.  There is a large literature on secant varieties, and the vast majority of results focus on computing their dimension \cite{Alexander1995, Catalisano2002}.  Inspired by problems in computational complexity and algebraic statistics more attention has been paid to the problem of determining the vanishing ideals $I(X^{\{r\}})$ of secant varieties \cite{Landsberg2004, Simis2000, Sturmfels2006}.  

This paper presents a case study of the secant ideals $I(X^{\{2\}})$ of a particular family of toric varieties associated to the second hypersimplices 
$$\Delta(2,n) =  \conv( \{e_i + e_j  \, \, | \, \, 1 \leq i < j \leq n \} ).$$
The associated toric variety $X_{2,n}$ arises in algebraic geometry as the closure of the torus orbit of a generic point on the Grassmannian $Gr_{2,n}$.  The secant varieties $X_{2,n}^{\{r\}}$ arise in statistics as the projectivization of the Zariski closure of the parameter space of the factor analysis model, with $r$-factors \cite{Drton2007}.

Our main result is the computation of a Gr\"obner basis for the secant ideal $I(X_{2,n}^{\{2\}})$, with respect to a certain circular term order, confirming a conjecture of Drton, Sturmfels, and the author \cite{Drton2007}.  The proof relies on the ``delightful'' strategy described by Sturmfels and the author \cite{Sturmfels2006} plus the connection betweens secant ideals and prolongations, introduced in the work of Landsberg and Manivel \cite{Landsberg2003} and extended in the work of Sidman and the author \cite{Sidman2006}.  As a corollary to these arguments, we also deduce a Gr\"obner basis for the symbolic square of the second hypersimplex.

\section*{Acknowledgments}
Seth Sullivant was partially supported by NSF grant DMS-0840795.


\section{Initial Ideal of the Second Hypersimplex}

In this section, we introduce the circular term order $\prec$ and describe the Gr\"obner basis  and initial ideal of the second hypersimplex, recalling results from \cite{Deloera1995}.  The quadratic squarefree initial ideal ${\rm in}_\prec(I_n)$ has a simple combinatorial description in terms of non-crossing edges in the circular straight-line drawing of the complete graph $K_n$.  Then recalling results on secant ideals and symbolic powers of edge ideals from \cite{Simis2000, Sturmfels2006, Sullivant2007}, we give combinatorial descriptions of the ideals ${\rm in}_\prec(I_n)^{\{2\}}$ and ${\rm in}_\prec(I_n)^{(2)}$.

Throughout the remainder of the paper, we use the notation $I^{\{r\}}$ to denote the \emph{secant ideal} of the ideal $I$.  If $I$ is a radical ideal in a polynomial ring over an algebraically closed field, then $I^{\{r\}}$ is the vanishing ideal of the $r$-th secant variety of $V(I)$.  The notion of secant ideal extends beyond both radical ideals and algebraic fields, and the definitions can be found in \cite{Simis2000, Sturmfels2006}, though we will not need these here.

Let $\cc[x] :=  \cc[x_{ij} \, \, | \, \, 1 \leq i < j \leq n ]$ and $\cc[t] :=  \cc[t_i \, \, | \, \, 1 \leq i \leq n]$ and let $\phi_n$ be the ring homomorphism: 
$$\phi_n :  \cc[x] \rightarrow \cc[t],  \quad  x_{ij} \mapsto t_i t_j.$$
The toric ideal $I_n  = \ker \phi_n$ is the vanishing ideal of the toric variety of the second hypersimplex.  

We will often need to work with the combinatorial structure of a certain circular embedding of the complete graph $K_n$.  We consider the vertices of $K_n$ as the $n$-th roots of unity in the complex plane.  Each edge $(i,j)$ connects two of the roots of unity.   This drawing of $K_n$ is as the set of all diagonals (including edges) of a regular convex $n$-gon in the plane.  

The edges of $K_n$ fall into $\lfloor \frac{n}{2}  \rfloor$ orbits under the action of the dihedral group $D_n$ on the $n$th roots of unity.  Let the $i$th class consist of the edges that are equivalent to the edge $1i$, for $i \in \{2, \ldots, \lceil (n+1)/2 \rceil \}$.  This also divides the variables $x_{ij}$ into $\lfloor \frac{n}{2} \rfloor$ classes.  

\begin{figure}[h]
\includegraphics[width = 10cm]{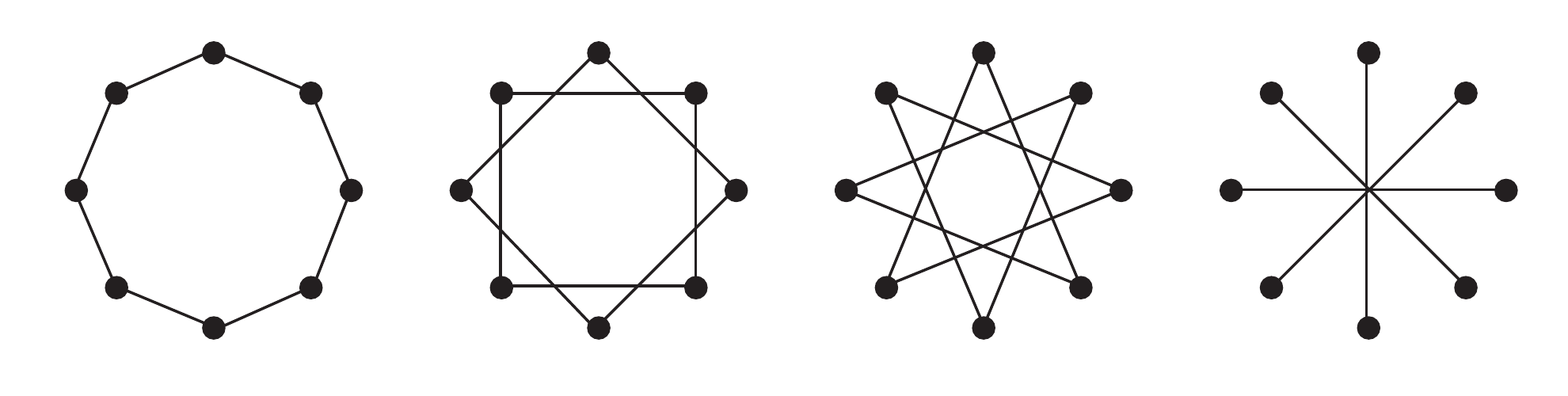} \caption{The four orbits of edges in the circular embedding of $K_8$}
\end{figure}

\begin{defn}
A circular term order $\prec$ is any block term order such that $x_{i_1 j_1} \succ x_{i_2j_2}$ whenever the edge $i_1 j_1$ is in a smaller class than $i_2 j_2$.  
\end{defn}

In other words, large variables in the block ordering correspond to edges that are close to the boundary of the polygon, and small edges cut deep through its interior.  In figure 1, the four orbits of edges in $K_8$ are arranged in decreasing weight in the circular term order.  De Loera, Sturmfels, and Thomas \cite{Deloera1995} characterized the Gr\"obner basis for the second hypersimplex with respect to any circular term order.

\begin{thm}
The set of quadratic binomials
$$\left\{  \underline{x_{ij}x_{kl}} - x_{ik}x_{jl}, \underline{x_{il}x_{jk}} - x_{ik}x_{jl} \, \, | \, \, 1 \leq i < j < k <l \leq  n  \right\}$$
form a reduced Gr\"obner basis for $I_n$ with respect to any circular term order. 
\end{thm}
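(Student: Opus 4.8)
The plan is to use the standard criterion for Gr\"obner bases of toric ideals together with a combinatorial analysis of the circular drawing. First I would check membership: both monomials of $x_{ij}x_{kl}-x_{ik}x_{jl}$ and of $x_{il}x_{jk}-x_{ik}x_{jl}$ are sent by $\phi_n$ to $t_it_jt_kt_l$, so both binomials lie in $\ker\phi_n=I_n$. Next I would confirm that the underlined monomials are the leading terms under an \emph{arbitrary} circular term order. Writing $a_1=j-i$, $a_2=k-j$, $a_3=l-k$, $a_4=n-(l-i)$ for the four arc lengths cut out by $i<j<k<l$, the circular distance of an edge is the smaller of the two arcs it subtends; since $\min(a_1,a_3)$ and $\min(a_2,a_4)$ are each at most every one of $a_1+a_2,\,a_3+a_4,\,a_2+a_3,\,a_1+a_4$, they are at most the circular distances of the two crossing edges $(i,k),(j,l)$. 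Hence in the block-by-class comparison the first class in which the monomials differ is occupied by the parallel product $x_{ij}x_{kl}$ (resp.\ the nested product $x_{il}x_{jk}$), making each the leading term, as underlined. Equivalently, by Ptolemy's relation the product of the two crossing chords exceeds each non-crossing product, so crossing edges cut deepest and are smallest.

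With the leading terms fixed, I would invoke the criterion that a set $G\subseteq I_n$ of binomials is a Gr\"obner basis if and only if no two distinct \emph{standard} monomials (those divisible by no leading term of $G$) lie in the same fiber of $\phi_n$; equivalently, the standard monomials have pairwise distinct images and so are linearly independent modulo $I_n$. The leading terms are exactly the products $x_ex_f$ of two vertex-disjoint edges $e,f$ on four vertices that do \emph{not} cross in the circular drawing, while the trailing term $x_{ik}x_{jl}$ is the crossing pairing. Thus a monomial is standard precisely when its support multigraph is \emph{pairwise crossing}: any two vertex-disjoint edges cross. I would also note that each binomial rewrites a non-crossing pair into the crossing pair while preserving every vertex degree and strictly decreasing $\prec$; hence this rewriting terminates and every monomial reduces to a standard monomial lying in its own fiber.

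The heart of the argument, and the step I expect to be the main obstacle, is uniqueness: each fiber of $\phi_n$ contains \emph{at most one} standard monomial, i.e.\ a prescribed vertex-degree sequence is realized by at most one pairwise-crossing multigraph. I would prove this from the rigidity the crossing condition imposes. Cutting the circle between $n$ and $1$ turns each edge $(a,b)$ into the interval $[a,b]$, and two disjoint edges cross exactly when their intervals interleave. Any two edges of a pairwise-crossing multigraph then have intersecting intervals (disjoint edges interleave; adjacent edges share an endpoint), so by the one-dimensional Helly property the intervals share a common point $q$. Consequently every edge straddles $q$, yielding a bipartite left/right split in which the crossing condition becomes the \emph{non-nesting} condition; a non-nesting bipartite multigraph is the comonotone pairing of the sorted left- and right-endpoint sequences, and this is determined by the endpoint multiplicities alone, hence by the degree sequence. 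A short exchange argument settles the boundary case in which $q$ must be taken at a vertex. This is the sorting phenomenon of Sturmfels: the standard monomials $x_{ik}x_{jl}$ with $i<j<k<l$ are exactly the sorted monomials, and confluence of sorting produces a unique canonical representative in each fiber.

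Combining the three steps, every fiber contains exactly one standard monomial (existence by termination of the rewriting, uniqueness by the crossing rigidity), so distinct standard monomials have distinct images and are linearly independent modulo $I_n$. By the toric criterion this proves $G$ is a Gr\"obner basis with the asserted leading terms. Reducedness is then immediate: each trailing term $x_{ik}x_{jl}$ is itself a standard (sorted, crossing) monomial, and no leading term divides another, so no further reduction is possible and $G$ is the reduced Gr\"obner basis.
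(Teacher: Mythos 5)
The paper does not actually prove this theorem: it is quoted from De Loera, Sturmfels, and Thomas \cite{Deloera1995}, so there is no internal proof to compare against. Judged on its own terms, your proposal follows what is essentially the standard route for such toric statements (and, as far as I recall, the route of the cited paper): verify membership in $\ker\phi_n$, identify the leading terms, and then use the fiber criterion that a set of binomials in a toric ideal is a Gr\"obner basis iff each fiber of $\phi_n$ contains at most one standard monomial. Your identification of the standard monomials with the pairwise-crossing multigraphs is right (note that with the paper's convention edges sharing a vertex count as crossing, so only vertex-disjoint pairs can be non-crossing, matching the generators). The leading-term analysis is also correct, though you should state it with strict inequalities: what one needs is $\min(a_1,a_3)$ strictly less than each of $a_1+a_2,\,a_3+a_4,\,a_2+a_3,\,a_1+a_4$ (which holds because every arc has length at least $1$), so that the most significant block in which the two monomials differ is occupied only by the non-crossing product; ``at most'' alone would leave open a tie that a block order need not resolve uniformly.

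The one place where you are underselling the work is the uniqueness step. The Helly reduction is clean when the common point $q$ can be taken in the interior of a gap, and your observation that the resulting bipartite pairwise-crossing multigraph is the comonotone (non-nesting) matching, hence determined by the degree sequence, is correct; one should also note that two realizations with different gaps force the intervening vertices to have degree zero, so the splits agree. But the case where $q$ is forced to be a vertex $v$ is not a negligible boundary case: it covers, for example, every triangle and every ``star at $v$ plus straddling edges'' configuration, and there the comonotone description genuinely fails --- a star edge $(v,w)$ with $w>v$ must \emph{contain} the right endpoint of every straddling edge (otherwise it nests inside one and the pair does not cross), which is the opposite of the non-nesting pattern. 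The configuration is still rigid enough that the degree sequence determines it (the star edges at $v$ must reach to the extremes on each side, the straddlers are comonotone among themselves), but this requires an explicit argument, not just ``a short exchange argument.'' Writing out that case, or replacing the whole uniqueness step by the sorting/confluence argument you mention in passing (every fiber contains the sorted monomial, sorting is confluent, and sorted $=$ pairwise crossing), would close the proof.
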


We say that a pair of edges $ij$ $kl$ \emph{cross} if the line segments in the circular drawing intersect (including at the endpoints). 
Note that the underlined terms are the leading terms of the indicated quadratic binomials.  In terms of the circular embedding of $K_n$, these binomials correspond to replacing a noncrossing pair of edges with a crossing pair of edges, as illustrated.

\begin{figure}[h]
\includegraphics[width = 6cm]{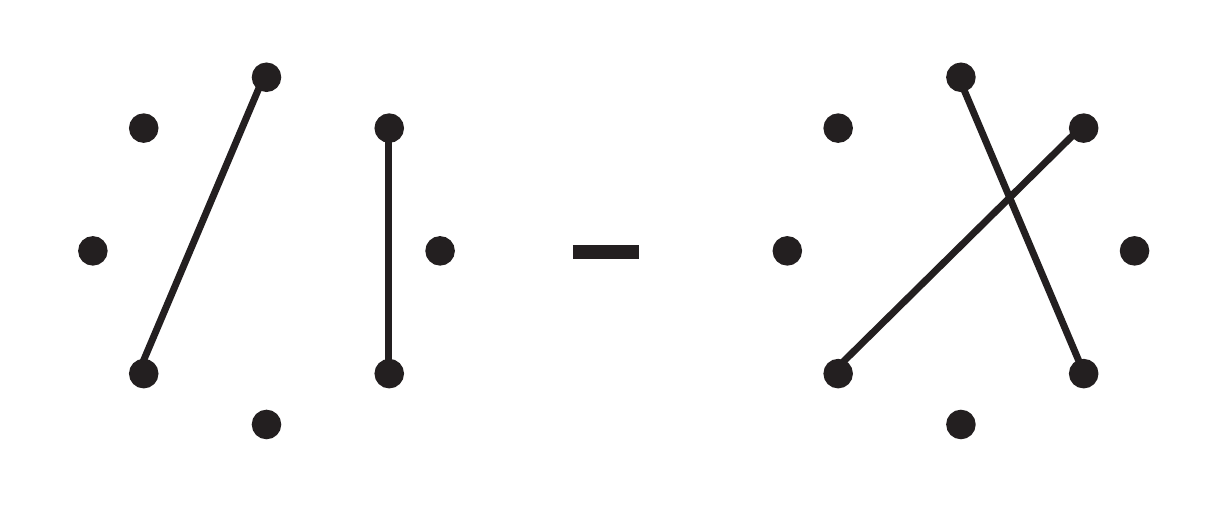}
\end{figure}

The notion of crossing leads to a simple description of ${\rm in}_\prec(I_n)$.

\begin{cor}
The monomial ideal ${\rm in}_\prec(I_n)$ is generated by all noncrossing pairs in the circular embedding of $K_n$;  that is,
$${\rm in}_\prec(I_n)  =  \left<  x_{ij} x_{kl}  \, \, | \, \, ij \mbox{ does not cross } kl  \right>.$$
\end{cor}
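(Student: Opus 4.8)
The plan is to obtain the corollary as an immediate consequence of the preceding theorem, using the standard fact that once a Gr\"obner basis is known, the initial ideal is generated by the leading terms of its elements. Since the quadratic binomials listed in the De Loera--Sturmfels--Thomas theorem form a Gr\"obner basis for $I_n$ with respect to any circular term order $\prec$, and the underlined monomials are their leading terms, I would conclude
$$
{\rm in}_\prec(I_n) = \langle\, x_{ij}x_{kl},\ x_{il}x_{jk} \ \mid\ 1 \le i<j<k<l \le n \,\rangle.
$$
It then remains to identify this list of squarefree quadratic monomials with the set $\{\, x_{ab}x_{cd} \mid ab \text{ does not cross } cd \,\}$, which is a purely combinatorial matter.

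The key step is a statement about matchings on four cyclically ordered points. First I would note that if two edges share a vertex then their chords meet at that vertex, so under the convention that intersection at endpoints counts as crossing, any noncrossing pair must be vertex-disjoint and therefore span four distinct vertices. Given such four vertices, relabelling them in increasing circular order as $i<j<k<l$, there are exactly three ways to pair them into two edges: $\{ij,kl\}$, $\{il,jk\}$, and $\{ik,jl\}$. Inspecting the regular $n$-gon drawing, $ij$ and $kl$ are adjacent chords and $il$ and $jk$ are nested chords, so neither of the first two matchings crosses, whereas $ik$ and $jl$ are the two interior diagonals and do intersect, so the third matching crosses.

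Both inclusions then follow from this trichotomy. For one direction, each generator $x_{ij}x_{kl}$ or $x_{il}x_{jk}$ with $i<j<k<l$ is one of the two noncrossing matchings, hence lies in the right-hand ideal. For the other, any noncrossing pair is vertex-disjoint, and after reordering its vertices as $i<j<k<l$ it must be one of the two noncrossing matchings---the crossing matching $\{ik,jl\}$ being excluded---so it equals a leading term from the theorem. I expect the only genuine content to lie in this middle step: pinning down the endpoint convention so that noncrossing forces disjoint support, and verifying the crossing status of the three matchings of four points on a circle. The rest is the routine passage from a Gr\"obner basis to its initial ideal, so I anticipate no serious obstacle.
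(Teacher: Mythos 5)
Your proposal is correct and follows essentially the same (implicit) reasoning as the paper: the paper treats the corollary as immediate from the De Loera--Sturmfels--Thomas theorem, with the underlined leading terms $x_{ij}x_{kl}$ and $x_{il}x_{jk}$ for $i<j<k<l$ accounting exactly for the noncrossing pairs, given the stated convention that edges meeting at an endpoint count as crossing. Your explicit verification of the trichotomy of matchings on four cyclically ordered points is just a careful spelling-out of what the paper leaves to the reader.
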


To prove our results about the Gr\"obner bases of the secant ideal $I_n^{\{2\}}$ and symbolic powers $I_n^{(2)}$, we want to employ the ``delightful'' strategy described in \cite{Sturmfels2006} and \cite{Sullivant2007}.  The idea here is to take advantage of the following proposition:

\begin{prop}\label{prop:delight}
\begin{enumerate}
\item \cite{Simis2000, Sturmfels2006} For any term order $\prec$ and any ideal $I$,  ${\rm in}_\prec(I^{\{r\}})  \subseteq {\rm in}_\prec(I)^{\{r\}}.$
\item  \cite{Sullivant2007}  Suppose that $I$ and ${\rm in}_\prec(I)$ are radical and $\kk$ is algebraically closed.  Then  ${\rm in}_\prec(I^{(r)})  \subseteq {\rm in}_\prec(I)^{(r)}.$
\end{enumerate}
\end{prop}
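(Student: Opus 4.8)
The plan is to treat the two containments separately, since they rest on different structural inputs, though both express the same principle: passing to an initial ideal does not commute with the operation defining the secant power (elimination) or the symbolic power (differentiation), yet it degenerates in exactly the direction that yields the stated inclusion. For part (1) I would first reduce to a single join. Writing $I^{\{r\}}$ as the $r$-fold join $I\ast\cdots\ast I$, where the join $I\ast J$ of two ideals is formed by introducing duplicate variable sets $y,z$, adjoining the linear forms $x_i-y_i-z_i$, and eliminating $y$ and $z$, associativity and monotonicity of $\ast$ reduce the claim to the two-factor statement ${\rm in}_\prec(I\ast J)\subseteq {\rm in}_\prec(I)\ast {\rm in}_\prec(J)$, after which a short induction on $r$ gives the general case.

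For the two-factor statement the key is the pullback description of the join. Let $\alpha\colon\kk[x]\to\kk[y,z]$ be the ring map $x_i\mapsto y_i+z_i$, and let $I^e,J^e$ denote the extensions to $\kk[y,z]$ of $I$ (in the $y_i$) and of $J$ (in the $z_i$). Since the $y_i+z_i$ are algebraically independent, $\alpha$ is injective and $I\ast J=\alpha^{-1}(I^e+J^e)$, with the analogous identity for the initial ideals. Now choose a weight vector $w$ that represents $\prec$ simultaneously for $I$, $J$, and $I\ast J$, and extend it to $\kk[y,z]$ by $\deg_w y_i=\deg_w z_i=w_i$. Two facts then combine. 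First, $\alpha$ is homogeneous for the $w$-grading, because each $y_i+z_i$ is $w$-homogeneous, so $\alpha({\rm in}_w f)={\rm in}_w(\alpha f)$ for every $f$. Second, because $I^e$ and $J^e$ are generated in the disjoint variable sets $\{y_i\}$ and $\{z_i\}$, the union of Gr\"obner bases for $I$ and $J$ is a Gr\"obner basis for $I^e+J^e$ (all cross $S$-pairs have coprime leading terms, hence reduce to zero), giving ${\rm in}_w(I^e+J^e)={\rm in}_w(I)^e+{\rm in}_w(J)^e$. Chaining these, for $f\in I\ast J$ we obtain $\alpha({\rm in}_w f)={\rm in}_w(\alpha f)\in {\rm in}_w(I)^e+{\rm in}_w(J)^e$, so ${\rm in}_w f\in {\rm in}_w(I)\ast {\rm in}_w(J)$; as $w$ represents $\prec$, the ideal generated by these forms is ${\rm in}_\prec(I\ast J)$, which gives the containment.

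For part (2) I would use the differential (Zariski--Nagata) description of symbolic powers, which is exactly where the radicality and algebraic closedness hypotheses enter. Over the algebraically closed (hence perfect) field $\kk$, a radical ideal $I$ satisfies $I^{(r)}=\{\,f:\partial^\alpha f\in I\text{ for all }|\alpha|\le r-1\,\}$, where $\partial^\alpha$ denotes the order-$|\alpha|$ differential (Hasse) operators, and likewise for the radical ideal ${\rm in}_\prec(I)$. It then suffices to take $f\in I^{(r)}$ with leading monomial $x^\beta={\rm in}_\prec(f)$ and to check $\partial^\alpha(x^\beta)\in {\rm in}_\prec(I)$ for each $|\alpha|\le r-1$. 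If $\alpha\not\le\beta$ then $\partial^\alpha(x^\beta)=0$ and there is nothing to prove; if $\alpha\le\beta$ then, because a term order is compatible with multiplication, dividing exponents by $x^\alpha$ preserves $\prec$, so $x^{\beta-\alpha}$ is the leading monomial of $\partial^\alpha f$ and occurs without cancellation (the term $x^\beta$ of $f$ is its unique source). Since $\partial^\alpha f\in I$, its leading monomial $x^{\beta-\alpha}$ lies in ${\rm in}_\prec(I)$, and this monomial equals $\partial^\alpha(x^\beta)$ up to a nonzero scalar. Hence ${\rm in}_\prec(f)\in {\rm in}_\prec(I)^{(r)}$, and the ideal generated by these leading monomials is ${\rm in}_\prec(I^{(r)})$, giving the claim.

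The main obstacle in both parts is precisely that forming an initial ideal does not commute with the defining operation, and the real work is to control the degeneration. In part (1) this is the reconciliation between the weight degeneration and the term order $\prec$: one must choose $w$ generic enough to represent $\prec$ for all three ideals at once, so that the clean $w$-homogeneous identity $\alpha\circ{\rm in}_w={\rm in}_w\circ\alpha$ and the Gr\"obner-basis splitting in disjoint variables upgrade from a statement about weight forms to one about ${\rm in}_\prec$. In part (2) the corresponding point is the non-commutation of differentiation with $\prec$, resolved by the observation that a nonzero $\partial^\alpha f$ has exactly the expected leading monomial $x^{\beta-\alpha}$, which hinges on the uniqueness of the global leading term and on order-compatibility with multiplication. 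In characteristic $p$ one must use Hasse derivatives rather than ordinary partials, so that the relevant operators do not vanish identically, but the argument is otherwise unchanged.
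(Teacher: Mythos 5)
The paper offers no proof of this proposition: both parts are quoted results, part (1) cited to Simis--Ulrich and Sturmfels--Sullivant and part (2) to \cite{Sullivant2007}, so there is no in-paper argument to compare against. Your two arguments are correct, and they essentially reconstruct the proofs in the cited sources. For part (1), the chain you describe --- reducing $I^{\{r\}}$ to an iterated join, the pullback description $I\ast J=\alpha^{-1}(I^{e}+J^{e})$ under $x_i\mapsto y_i+z_i$, a weight vector representing $\prec$ simultaneously for $I$, $J$, and $I\ast J$ extended so that $\alpha$ is $w$-homogeneous, and the fact that Gr\"obner bases in disjoint variable sets concatenate by Buchberger's coprimality criterion --- is the standard argument for ${\rm in}_\prec(I\ast J)\subseteq{\rm in}_\prec(I)\ast{\rm in}_\prec(J)$ and matches the approach of \cite{Sturmfels2006}. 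For part (2), the Zariski--Nagata differential characterization of symbolic powers of radical ideals over a perfect field, combined with the observation that $x^{\beta-\alpha}$ arises without cancellation as the leading monomial of $\partial^{\alpha}f$, is exactly the mechanism of the proof in \cite{Sullivant2007}. The only point worth tightening is your closing remark about characteristic $p$: even with Hasse derivatives, the coefficient $\binom{\beta}{\alpha}$ can vanish when $\alpha\le\beta$, so $x^{\beta-\alpha}$ need not be the leading monomial of $\partial^{\alpha}f$; but in that case $\partial^{\alpha}(x^{\beta})=0$ and the required membership in ${\rm in}_\prec(I)$ is vacuous, so a one-line case split closes the gap, and in the present paper's setting $\kk=\cc$ the issue does not arise at all.
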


Using Proposition \ref{prop:delight}, if  we can find a collection of polynomials $\mathcal{G} \subset I^{\{r\}}$ such that $\left< {\rm in}_\prec(g) \,\, | \, \, g \in \mathcal{G}  \right>  =  {\rm in}_\prec(I)^{\{r\}}$, we can immediately deduce that $\mathcal{G}$ is a Gr\"obner basis for $I^{\{r\}}$ with respect to $\prec$,  (and similarly for the symbolic power.)  A term order $\prec$ is called $r$-delightful for $I$ when the equality ${\rm in}_\prec(I^{\{r\}})  = {\rm in}_\prec(I)^{\{r\}}$ holds.    Our goal will be to prove that the circular term order is $2$-delightful for $I_n$ by finding a combinatorial description for ${\rm in}_\prec(I_n)^{\{2\}}$ and ${\rm in}_\prec(I_n)^{(2)}$ and producing the polynomials whose initial terms generate these ideals.

The ideal ${\rm in}_\prec(I_n)$ is generated by squarefree quadrics so it is an example of an \emph{edge ideal}.  In general, an edge ideal is associated to an undirected graph $G$, as the ideal $I(G)  =  \left< x_i x_j  \,\, | \, \, ij \in E(G)  \right>$.  In the case of the initial ideal ${\rm in}_\prec(I_n)$, the corresponding graph is the \emph{non-crossing graph}.

\begin{defn}
The noncrossing graph $G_n$ has vertex set $V(G_n)  =  { [n] \choose 2}$ consisting of all two element subsets of $[n]$.  A pair $ij$ and $kl$ form an edge of the noncrossing graph if and only if the edges $ij$ and $kl$ do not cross in the circular embedding of the complete graph $K_n$. 
\end{defn}

The combinatorics of the secant ideals and symbolic powers of edge ideals are reasonably well-understood.  First, we deal with the case of the secant ideals.

\begin{thm}  \cite{Simis2000, Sturmfels2006}\label{thm:oddcycle}
Let $I(G)$ be an edge ideal.  Then 
$$I(G)^{\{2\}}  =  \left<  x_V  \,\, | \, \,  V \subseteq[n],  G_V \emph{ is an odd cycle } \right>.$$
\end{thm}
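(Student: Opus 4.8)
The plan is to reduce the computation of the secant ideal $I(G)^{\{2\}}$ to a purely combinatorial membership test for monomials, using the standard presentation of the secant ideal as a join. Recall that $I(G)^{\{2\}} = I(G) \ast I(G)$, and that membership in a join has the substitution characterization from \cite{Simis2000, Sturmfels2006}: for $f \in \cc[x]$, one has $f \in I(G)^{\{2\}}$ if and only if, after the substitution $x_i \mapsto y_i + z_i$, the image $f(y+z)$ lies in the ideal $I(G)(y) + I(G)(z) \subseteq \cc[y,z]$, where $I(G)(y) = \langle y_iy_j : ij \in E(G)\rangle$ and similarly for $z$. Since $I(G)(y)+I(G)(z)$ is a monomial ideal and the join of monomial ideals is again monomial, it suffices to decide which monomials $x^a$ lie in $I(G)^{\{2\}}$; I will then read off the minimal generators.

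First I would expand $x^a(y+z) = \prod_i (y_i+z_i)^{a_i}$. All coefficients are positive, so no cancellation occurs, and the monomials appearing are exactly $\prod_i y_i^{c_i} z_i^{a_i - c_i}$ with $0 \le c_i \le a_i$. Because $I(G)(y)+I(G)(z)$ is monomial, $x^a \in I(G)^{\{2\}}$ holds if and only if every such monomial is divisible by some $y_iy_j$ or some $z_iz_j$ with $ij \in E(G)$; equivalently, for every choice of exponents $c$, the set $S = \{i : c_i \ge 1\}$ or the set $T = \{i : a_i - c_i \ge 1\}$ contains an edge of $G$.

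The key combinatorial step is to recognize the obstruction. Writing $U = \mathrm{supp}(a)$, a ``bad'' choice of $c$ (one witnessing $x^a \notin I(G)^{\{2\}}$) is exactly a pair of \emph{independent} sets $S, T$ with $S \cup T = U$, which exists if and only if $G_U$ is bipartite. Taking $c_i = a_i$ on one colour class and $c_i = 0$ on the other shows bipartiteness suffices; conversely any bad $c$ produces a $2$-colouring of $U$ by independent sets. Thus $x^a \in I(G)^{\{2\}}$ if and only if $G_U$ is non-bipartite, a condition depending only on the support $U$, not on the multiplicities $a_i$.

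Finally I would translate non-bipartiteness into the stated generators. A graph is non-bipartite precisely when it contains an odd cycle, and a shortest odd cycle is necessarily chordless, so every non-bipartite $G_U$ contains an induced subgraph $G_W$ that is an odd cycle with $W \subseteq U$; hence $x_W$ divides $x^a$. Consequently $I(G)^{\{2\}}$ is the monomial ideal whose elements are the monomials with non-bipartite support, and its minimal squarefree generators are exactly the $x_V$ with $G_V$ an odd cycle. I expect the main obstacle to be bookkeeping rather than conceptual: one must justify carefully that the join is a monomial ideal and that multiplicities in $a$ never change the membership criterion, so that the chordless odd cycles---and not odd cycles carrying chords---are the minimal generators.
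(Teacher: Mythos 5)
Your argument is correct. Note that the paper itself gives no proof of this statement---it is quoted from \cite{Simis2000, Sturmfels2006}---and what you have written is essentially the standard argument from those references: reduce to monomials via the substitution criterion $f\in I\ast J \iff f(y+z)\in I(y)+J(z)$, observe that a bad exponent vector $c$ is exactly a cover of the support by two independent sets (equivalently, bipartiteness of the induced subgraph), and pass from ``contains an odd cycle'' to ``contains an \emph{induced} odd cycle'' via a shortest odd cycle being chordless. The one ingredient you take on faith, that the join of monomial ideals is again a monomial ideal, is genuinely needed (otherwise knowing which monomials lie in $I(G)^{\{2\}}$ does not determine the ideal) and is precisely the torus-action/multigrading lemma of \cite{Sturmfels2006}; with that cited, the proof is complete.
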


Here $G_V$ is the induced subgraph of $G$ with vertex set $V$ and $x_V = \prod_{i \in V} x_i$.  Thus, to describe the minimal generators of ${\rm in}_\prec(I_n)^{\{2\}}$, we must characterize the odd cycles in the noncrossing graph.

\begin{cor}\label{cor:secantinitial}
When $n = 5$, ${\rm in}_\prec(I_n)^{\{2\}} =  \left< x_{12}x_{23}x_{34}x_{45}x_{15} \right>$ has a single generator of degree five.  For $n \geq 6$ the  secant ideal ${\rm in}_\prec(I_n)^{\{2\}}$ has generators of every odd degree between $3$ and $n$, inclusive.  The generators come in two types:
\begin{enumerate}
\item Degree three generators: Let  $i_1 < i_2 < i_3 < i_4 < i_5 < i_6$.  Then $x_{i_1i_2} x_{i_3i_4} x_{i_5i_6}$ and $x_{i_1i_6} x_{i_2i_5} x_{i_3i_4}$ are degree three generators of ${\rm in}_\prec(I_n)^{\{2\}}$.
\item  Higher degree generators:  Let $i_1 \leq j_1 < i_2 \leq j_2 < \cdots <  i_{2k +1} \leq j_{2k+1}$, where $k >1 $.  Then  the monomial $\prod_{l = 1}^{2k+1}  x_{i_l j_{l +k-1}}$ is a minimal generator of ${\rm in}_\prec(I_n)^{\{2\}}$.  (The index $l + k -1$ is interpretted modulo $2k +1$.)  
\end{enumerate}
\end{cor}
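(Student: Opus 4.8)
The plan is to apply Theorem~\ref{thm:oddcycle} to the edge ideal ${\rm in}_\prec(I_n) = I(G_n)$ and push the entire problem into the combinatorics of induced subgraphs of the noncrossing graph $G_n$. Since ${\rm in}_\prec(I_n)^{\{2\}}$ is a monomial ideal, its minimal generators are exactly the squarefree monomials $x_V$ not divisible by another generator; because no proper vertex subset of a cycle induces a cycle, these correspond precisely to the \emph{induced} odd cycles of $G_n$. Thus the whole corollary reduces to classifying the sets $V \subseteq \binom{[n]}{2}$ for which the induced subgraph of $G_n$ on $V$ is an odd cycle, i.e. cyclic sequences of chords $e_1,\dots,e_{2k+1}$ in the circular drawing in which cyclically consecutive chords are noncrossing and all remaining pairs cross (recalling that two chords already cross when they share an endpoint). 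Everything in sight is equivariant for the dihedral group $D_n$, so it suffices to classify these configurations up to the $D_n$-action.

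First I would dispose of the triangles. Here all three chords are pairwise noncrossing, hence pairwise disjoint, so they form a noncrossing perfect matching of six of the marked points. The noncrossing matchings of six points are counted by the Catalan number $C_3 = 5$, and a short check shows they fall into exactly two $D_n$-orbits: the ``fully separated'' matching $\{i_1i_2, i_3i_4, i_5i_6\}$ and the ``fully nested'' matching $\{i_1i_6, i_2i_5, i_3i_4\}$, the remaining three matchings being rotations of these two. These orbit representatives are precisely the monomials displayed in part~(1). When $n=5$ six disjoint endpoints are unavailable, so there are no triangles, which is the source of the special behavior in that case.

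Next I would treat the induced cycles of length $2k+1 \ge 5$ and show they are exactly the block-shift configurations of part~(2). For the easy direction I would verify directly that a configuration with $2k+1$ blocks $[i_l,j_l]$ and chords $c_l = x_{i_l\,j_{l+k-1}}$ is an induced $(2k+1)$-cycle: writing $c_l$ as the chord joining the left end of block $l$ to the right end of block $l+k-1$, each $c_l$ cuts off a circular arc containing exactly the blocks strictly between its endpoints, and a direct computation of which pairs of arcs interleave (or share an endpoint, which happens only at the ``seam'' inside a block and is the reason for the $i_l \le j_l$ convention) shows that $c_l$ is noncrossing with exactly $c_{l+k}$ and $c_{l+k+1}$ and crosses every other chord. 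Since $\gcd(k, 2k+1)=1$, the noncrossing relation links the chords into a single cycle of length $2k+1$, which is induced. Specializing all blocks to single points recovers the boundary pentagon for $n=5$ and, more generally, produces generators of every odd degree $3 \le 2k+1 \le n$.

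The main obstacle is the converse: proving that \emph{every} induced cycle of length $\ge 5$ arises this way. The crux is a rigidity statement for the crossing pattern $\overline{C_{2k+1}}$ (cross iff nonadjacent in the cycle). I would argue that, because each chord is noncrossing with only its two cyclic neighbors, any nesting or disjointness can occur only between neighbors; reading the chord endpoints in circular order then forces them to cluster into $2k+1$ consecutive blocks, each chord using the left end of one block and the right end of another. Tracking how far apart (cyclically) the two endpoint-blocks of a chord lie, and using that all $2k-2$ non-neighbors must cross, should pin this offset down to $k-1$ uniformly, yielding the block-shift form. I expect the delicate part to be ruling out exotic windings and degenerate coincidences of endpoints; this is cleanest via an extremal or inductive argument, e.g. peeling off a chord that is innermost in the nesting order, or inducting on $n$, and checking that the remaining chords still form an induced odd cycle of the predicted shape. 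Combining the triangle classification, the block-shift classification, and the degree count then yields the full statement, including the $n=5$ base case.
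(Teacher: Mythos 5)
Your overall strategy is the same as the paper's: invoke Theorem \ref{thm:oddcycle} to reduce the corollary to classifying the induced odd cycles of the noncrossing graph $G_n$, settle the triangles by classifying noncrossing perfect matchings on six circularly ordered points (your two-orbit count is correct), and verify directly that the block-shift configurations are induced $(2k+1)$-cycles. Those parts are sound and match the paper.

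The gap is the converse for cycles of length at least five: that \emph{every} induced odd cycle of $G_n$ of length $2k+1 \geq 5$ is a block-shift configuration. This is the substantive half of part (2), and your proposal only names it as ``the crux,'' states what the rigidity conclusion should be, and lists candidate methods (``should pin this offset down,'' ``I expect the delicate part to be ruling out exotic windings \dots cleanest via an extremal or inductive argument''). Nothing is actually carried out, and the degenerate coincidences of endpoints are deferred rather than handled. For comparison, the paper disposes of the degeneracies first, by the opposite move to the one you suggest: if a vertex of $K_n$ lies on two chords of the cycle, split it into two adjacent new vertices to reduce to the vertex-disjoint case, classify that, and specialize back; this is cleaner than peeling off an innermost chord or inducting on $n$. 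In the generic case the paper then argues locally: any three consecutive edges of the cycle form, up to rotation and relabelling, a configuration with exactly one crossing; since each chord must cross all $2k-2$ of its non-neighbors, each chord has exactly $2k-2$ of the remaining endpoints on one side and $2k+2$ on the other, which forces the alternating $i/j$ labelling of the $4k+2$ endpoints and, once one edge is written as $i_1 j_k$, pins every other edge to the form $i_l j_{l+k-1}$. If you flesh out your ``blocks and offsets'' sketch along these lines the argument closes; as written, the key classification is asserted rather than proved.
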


Note that in the sequence
$i_1 \leq j_1 < i_2 \leq j_2 < \cdots <  i_{2k +1} \leq j_{2k+1} < i_1$ we assume that the sequence makes exactly one full revolution around the circle.  We call such a sequence of indices \emph{admissible} .

\begin{proof}
According to Theorem \ref{thm:oddcycle}, we must classify the induced odd cycles in the noncrossing graph $G_n$.  First, the cycles of length three:  this is a set of three edges in the circular embedding of $K_n$, none of which cross each other.  Since six distinct indices appear in a set of three disjoint edges, this reduces to the graph $K_6$.   In $K_6$ there are exactly two combinatorial types of non-crossing triples, the first isomorphic to the triple $12, 34, 56$ and the second isomorphic to the triple $16, 25, 34.$  These account for all the degree three monomials in ${\rm in}_\prec(I_n)^{\{2\}}$.

Now consider a cycle in the noncrossing graph $G_n$ of odd length $2k+1$ greater than or equal to five.  
We may suppose that each vertex appears in exactly one edge, since any vertex that appeared in two edges could have edges extended to two new vertices beyond to arrive at a cycle without that repeated vertex.  

Suppose that we have three consecutive edges in this cycle.  These edges will always form, up to a rotation and relabelling of vertices, a set of edges with exactly one crossing, like:
$k_1 k_3, k_2 k_4, k_5k_6$
where $k_1 < k_2< k_3 < k_4 < k_5 < k_6$ are arranged in circular order.  Since every other edge in the cycle must cross edge $k_5k_6$, this implies that there are $2k-2$ vertices on one side of $k_5k_6$ and $2k+2$ vertices on the other side.  By symmetry this pattern holds true for all edges in the graph.  In particular, the parity of the connections implies that we can alternately label the indices appearing by an $i$ or a $j$, splitting the vertices into two classes such that every edge is incident to both classes.  Furthermore, to guarantee the desired crossing conditions, after choosing the $i$ and $j$  for one of the edges so that edge has the form $i_1 j_k$, this forces the other edges to have the form $i_l j_{l +k -1}$ to guarantee each vertex is only contained in one edge.
\end{proof}

The small symbolic powers of edge ideals are also easy to characterize.

\begin{thm}\cite{Sullivant2007}
Let $G$ be a graph.  Then $I(G)^{(2)}  = I(G)^2 + I(G)^{\{2\}}$.  In particular, $I(G)^{(2)}$ is generated by degree three monomials $x_{i} x_{j} x_{k}$ where $i,j,k$ are a cycle in $G$, and degree 4 monomials $x_ix_j x_kx_l$ where $ij$ and $kl$ are (not necessarily  vertex disjoint) edges in $G$. 
\end{thm}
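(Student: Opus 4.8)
The plan is to argue entirely on the level of monomials, since every ideal in sight is a monomial ideal. The starting point is the standard combinatorial description of the second symbolic power of a radical monomial ideal: writing $I(G)=\bigcap_C P_C$ over the minimal vertex covers $C$ of $G$, where $P_C=\langle x_v \mid v\in C\rangle$, one has $I(G)^{(2)}=\bigcap_C P_C^2$, and a monomial $x^{\bfa}$ lies in $P_C^2$ if and only if $\sum_{v\in C} a_v\ge 2$. Thus $x^{\bfa}\in I(G)^{(2)}$ precisely when every minimal vertex cover of $G$ meets $\mathrm{supp}(\bfa)$ with total multiplicity at least two. I would establish the two containments separately.

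For $I(G)^2+I(G)^{\{2\}}\subseteq I(G)^{(2)}$, the inclusion $I(G)^2\subseteq I(G)^{(2)}$ is the general fact $I^r\subseteq I^{(r)}$. For the secant part I would invoke Theorem~\ref{thm:oddcycle}: a generator $x_V$ has $G_V$ an odd cycle, and for any minimal vertex cover $C$ the set $C\cap V$ is a vertex cover of $G_V$; since the minimum vertex cover of an odd cycle has size at least two, $\sum_{v\in C} a_v = |C\cap V|\ge 2$, so $x_V\in P_C^2$ for every $C$ and hence $x_V\in I(G)^{(2)}$.

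The reverse containment $I(G)^{(2)}\subseteq I(G)^2+I(G)^{\{2\}}$ is the heart of the matter, and I would prove it by contraposition. Assume $x^{\bfa}\notin I(G)^2$ and $x^{\bfa}\notin I(G)^{\{2\}}$, and write $W=\mathrm{supp}(\bfa)$; the goal is a minimal vertex cover meeting $W$ with total multiplicity at most one. By Theorem~\ref{thm:oddcycle}, $x^{\bfa}\notin I(G)^{\{2\}}$ forces $G_W$ to contain no induced odd cycle; in particular $G_W$ is triangle-free. On the other hand $x^{\bfa}\notin I(G)^2$ means no two edges of $G$ pack into $\bfa$, so $G_W$ has no two vertex-disjoint edges and its matching number is at most one. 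Since $x^{\bfa}\in I(G)^{(2)}\subseteq I(G)$, the graph $G_W$ has at least one edge, and an intersecting, triangle-free family of edges is a star: all edges of $G_W$ pass through a common vertex $c$. I would then arrange $a_c=1$ — if $c$ has two neighbors in $W$ this is automatic, since otherwise the two edges at $c$ pack into $\bfa$ using $a_c\ge 2$, while if $G_W$ is a single edge then $x^{\bfa}\notin I(G)^2$ prevents both endpoints from being doubled, so one of them may be taken as $c$. Now $C_0=([n]\setminus W)\cup\{c\}$ is a vertex cover of $G$ meeting $W$ only in $c$, so any minimal vertex cover $C\subseteq C_0$ has $\sum_{v\in C}a_v\le a_c=1$; thus $x^{\bfa}\notin P_C^2$ and therefore $x^{\bfa}\notin\bigcap_C P_C^2=I(G)^{(2)}$, the desired contradiction. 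I expect the bookkeeping around vertices of multiplicity at least two — guaranteeing the star center can be chosen with $a_c=1$ — to be the main obstacle, since this is the only point where the non-squarefree nature of $x^{\bfa}$ genuinely enters.

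Finally, the ``in particular'' follows from the main equality together with Theorem~\ref{thm:oddcycle}. A generating set for $I(G)^{(2)}$ is given by the degree-four products of two edges generating $I(G)^2$ (the monomials $x_ix_jx_kx_l$ with $ij,kl\in E(G)$, not necessarily vertex disjoint) together with the odd-cycle generators of $I(G)^{\{2\}}$. But every odd cycle of length at least five contains two vertex-disjoint edges, so its generator already lies in $I(G)^2$; the only odd-cycle generators not absorbed this way are the triangles, which are the degree-three monomials $x_ix_jx_k$.
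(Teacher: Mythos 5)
The paper does not prove this statement at all --- it is imported verbatim from \cite{Sullivant2007} --- so there is no in-paper argument to compare yours against; what you have written is a self-contained proof, and it is correct. Your route is the natural one for monomial ideals: the decomposition $I(G)^{(2)}=\bigcap_C P_C^2$ over minimal vertex covers, the membership test $x^{\bfa}\in P_C^2\iff\sum_{v\in C}a_v\ge 2$, and then a contrapositive construction of a bad cover. All the delicate points check out: $C\cap V$ covering the odd cycle $G_V$ gives the easy containment; ``no two edges pack into $\bfa$'' correctly encodes the three ways a degree-four generator of $I(G)^2$ can divide $x^{\bfa}$ (disjoint edges, two edges at a common vertex $c$ with $a_c\ge 2$, one edge with both exponents $\ge 2$); pairwise-intersecting plus triangle-free does force a star; and in each case the star center can be chosen with exponent one, so that shrinking $([n]\setminus W)\cup\{c\}$ to a minimal cover $C$ gives $\sum_{v\in C}a_v\le 1$ and hence $x^{\bfa}\notin P_C^2$. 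The final observation that odd cycles of length $\ge 5$ contain a $2$-matching and hence are absorbed into $I(G)^2$ is exactly what is needed to pass from the ideal identity to the stated degree-$3$/degree-$4$ generating set. One cosmetic remark: from ``no induced odd cycle in $G_W$'' you only extract triangle-freeness, which is all you use, but it is worth noting this is equivalent to $G_W$ being bipartite --- the shortest odd cycle in any graph is induced --- in case a referee balks at the weaker-looking hypothesis.
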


\begin{cor}\label{cor:symbinitial}
The symbolic square of the initial ideal  ${\rm in}_\prec(I_n)^{(2)}$ is generated by noncrossing triples and pairs of noncrossing edge pairs in the noncrossing graph $G_n$.
\end{cor}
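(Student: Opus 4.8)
The plan is to apply the preceding theorem of \cite{Sullivant2007} directly to the graph $G = G_n$, since the entire content of this corollary is the translation of the graph-theoretic language of that theorem into the crossing combinatorics of the circular embedding of $K_n$. Because ${\rm in}_\prec(I_n)$ is exactly the edge ideal $I(G_n)$ of the noncrossing graph, the theorem applies verbatim and yields at once
$${\rm in}_\prec(I_n)^{(2)} = I(G_n)^{(2)} = I(G_n)^2 + I(G_n)^{\{2\}},$$
generated by the degree three monomials arising from $3$-cycles of $G_n$ together with the degree four monomials arising from (not necessarily vertex disjoint) pairs of edges of $G_n$. So the work is to recognize each of these two families in terms of crossings.

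For the degree three part, I would observe that a vertex of $G_n$ is an edge $ij$ of $K_n$, carrying the variable $x_{ij}$, and that three vertices span a triangle in $G_n$ precisely when the three corresponding edges of $K_n$ are pairwise noncrossing. This is exactly the notion of a \emph{noncrossing triple}, and it agrees with the degree three generators of ${\rm in}_\prec(I_n)^{\{2\}}$ already identified in Corollary \ref{cor:secantinitial}. For the degree four part, an edge of $G_n$ is by definition a noncrossing pair of edges of $K_n$, so a product of two edge monomials of $I(G_n)$ is a product $x_a x_b x_c x_d$ in which $\{a,b\}$ and $\{c,d\}$ are each noncrossing edge pairs: a \emph{pair of noncrossing edge pairs}. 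Here one should record that the cited theorem permits the two edges of $G_n$ to share a vertex, so these monomials need not involve four distinct edges of $K_n$, matching the ``not necessarily vertex disjoint'' clause.

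The only point that deserves a sentence of justification — and the closest thing to an obstacle — is why the longer odd cycles contribute nothing new, even though Theorem \ref{thm:oddcycle} describes $I(G_n)^{\{2\}}$ using \emph{all} odd cycles of $G_n$. The reason is already folded into the cited theorem: any odd cycle of $G_n$ of length $2k+1 \geq 5$ contains two vertex disjoint edges, so its monomial $x_V$ is divisible by a product of two edge monomials of $G_n$ and hence lies in $I(G_n)^2$. Thus the only secant generators not already present in $I(G_n)^2$ are the triangles, the cycles of length three having no two disjoint edges. Combining this with the two identifications above shows that the minimal generators of ${\rm in}_\prec(I_n)^{(2)}$ are exactly the noncrossing triples and the pairs of noncrossing edge pairs in $G_n$, as claimed.
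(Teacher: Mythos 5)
Your proposal is correct and matches the paper's (implicit) argument: the paper states this corollary with no proof, treating it as an immediate specialization of the preceding theorem of \cite{Sullivant2007} to the edge ideal $I(G_n) = {\rm in}_\prec(I_n)$, which is exactly what you do. Your added remark that odd cycles of length at least five contribute nothing because they contain two disjoint edges (hence lie in $I(G_n)^2$) is a correct and welcome clarification, though as you note it is already absorbed into the cited theorem.
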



\section{Master Polynomials}

In this section, we describe the master polynomials (Definition \ref{def:master}) , which are a collection of polynomials in the ideal $I^{\{2\}}_n$ whose initial terms generate the secant ideal  ${\rm in}_\prec(I_n)^{\{2\}}$.  This will allow us to complete the delightful strategy and prove:

\begin{thm}\label{thm:delight}
The circular term order $\prec$ is $2$-delightful for the ideal $I_n$ of the second hypersimplex.  In particular, the master polynomials and $3 \times 3$ off diagonal minors form a Gr\"obner basis for $I_n^{\{2\}}$ with respect to any circular term order.
\end{thm}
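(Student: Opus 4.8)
The plan is to run the ``delightful'' strategy set out after Proposition~\ref{prop:delight}. Part~(1) of that proposition already hands us the inclusion ${\rm in}_\prec(I_n^{\{2\}}) \subseteq {\rm in}_\prec(I_n)^{\{2\}}$ for free, so the entire content of the theorem reduces to producing an explicit finite set $\mathcal{G} \subseteq I_n^{\{2\}}$ whose initial monomials generate the monomial ideal ${\rm in}_\prec(I_n)^{\{2\}}$. Once such a $\mathcal{G}$ is in hand, the chain
$${\rm in}_\prec(I_n)^{\{2\}} = \left\langle {\rm in}_\prec(g) \mid g \in \mathcal{G}\right\rangle \subseteq {\rm in}_\prec(I_n^{\{2\}}) \subseteq {\rm in}_\prec(I_n)^{\{2\}}$$
forces equality throughout, simultaneously establishing $2$-delightfulness and the Gr\"obner basis property. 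The target monomial ideal has already been pinned down combinatorially in Corollary~\ref{cor:secantinitial}, so the remaining task is to hit each of its minimal generators with the initial term of an honest element of the secant ideal.

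First I would dispatch the degree-three generators. Both combinatorial types, $x_{i_1i_2}x_{i_3i_4}x_{i_5i_6}$ and $x_{i_1i_6}x_{i_2i_5}x_{i_3i_4}$, involve only six indices, so it suffices to produce $3 \times 3$ off-diagonal minors whose leading terms, under the circular order, are exactly these two monomials. Checking that such a minor lies in $I_n^{\{2\}}$ and that its underlined leading term matches the prescribed generator is a finite verification governed entirely by which of the six entries are large in the block order.

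The higher-degree generators $\prod_{l=1}^{2k+1} x_{i_l j_{l+k-1}}$ attached to admissible sequences are the real work, and they are handled by the master polynomials of Definition~\ref{def:master}. Here I would invoke the connection between secant ideals and prolongations of Landsberg--Manivel and Sidman--Sullivant: rather than verifying membership in $I_n^{\{2\}}$ by hand, I would realize each master polynomial as a suitable prolongation built from the quadratic binomial generators of $I_n$, which automatically places it in the secant ideal. The construction must be arranged so that, after the lower-weight contributions are accounted for, the unique dominant monomial is precisely the admissible-sequence product; this is where the circular term order does its combinatorial job, since the crossing pattern dictated by an admissible sequence singles out exactly one leading term.

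The main obstacle is exactly this last matching step: designing the master polynomials so that (i) they provably sit in $I_n^{\{2\}}$, for which the prolongation formalism is essential because a direct secant-ideal membership check is intractable, and (ii) their initial terms, computed in the circular order, reproduce every admissible-sequence generator of ${\rm in}_\prec(I_n)^{\{2\}}$ and nothing extraneous. The delicate part is controlling leading-term cancellation across the many monomials of an odd-degree prolongation and confirming, via the induced-odd-cycle description behind Corollary~\ref{cor:secantinitial}, that the resulting leading monomials exhaust the minimal generators. Once the master polynomials are shown to cover all admissible sequences and the minors cover degree three, the displayed chain closes the argument.
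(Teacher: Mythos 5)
Your overall skeleton is the paper's: invoke Proposition~\ref{prop:delight}(1), reduce to exhibiting polynomials of $I_n^{\{2\}}$ whose initial terms are the minimal generators of ${\rm in}_\prec(I_n)^{\{2\}}$ from Corollary~\ref{cor:secantinitial}, and close the inclusion chain. But there is a concrete step that fails: you propose to cover \emph{both} combinatorial types of degree-three generators with $3\times 3$ off-diagonal minors. That works only for the nested type $x_{i_1i_6}x_{i_2i_5}x_{i_3i_4}$, where the minor with rows $\{i_1,i_2,i_3\}$ and columns $\{i_4,i_5,i_6\}$ has exactly one term that is a noncrossing triple (the antidiagonal), so Proposition~\ref{prop:delight}(1) alone pins down the leading term. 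For the parallel type $x_{i_1i_2}x_{i_3i_4}x_{i_5i_6}$, any off-diagonal minor containing this monomial as a term must split each of the three edges between the row and column sets, and every such minor contains at least one \emph{other} noncrossing triple among its six terms (e.g.\ with rows $\{i_1,i_4,i_5\}$ the fully nested triple $x_{i_1i_6}x_{i_2i_5}x_{i_3i_4}$ also appears). Which of two noncrossing triples is larger in a circular block order depends on the actual circular positions: taking $n=12$ and $(i_1,\ldots,i_6)=(1,2,3,4,5,12)$, the triples $x_{1,2}x_{3,4}x_{5,12}$ and $x_{1,12}x_{2,5}x_{3,4}$ have edge-class multisets $\{2,2,6\}$ and $\{2,2,4\}$ respectively, so the nested triple wins and the minor's leading term is \emph{not} the parallel generator. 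This is exactly why the paper handles the parallel type with the $k=1$ master polynomial (the eight-term cubic of Definition~\ref{def:master}), reserving minors only for the nested type.

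Separately, for the higher-degree generators you correctly name the master polynomials and the prolongation criterion, but the two claims you defer --- that $f_{\bfi,\bfj}$ lies in $I_n^{\{2\}}$ and that $\bfx_{\bfi,\bfj}$ is its unique dominant term --- are the substance of the argument, not routine bookkeeping. Membership requires the specific signed sum over the group $Z_{\bfi,\bfj}$ so that every partial derivative of order at most $k$ leaves some transposition $(i_l\,j_{l-1})$ untouched, pairing the surviving monomials into binomials of $I_n$; nonvanishing and the leading-term identification both rest on a cyclic-crossing-number count showing that $\bfx_{\bfi,\bfj}$ is the only term of $f_{\bfi,\bfj}$ divisible by a noncrossing odd cycle. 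Without these (the paper's Lemmas~\ref{lem:belong} and~\ref{lem:init}) and with the degree-three fix above, the chain you display does not yet close.
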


Let $i_1 \leq j_1 < i_2 \leq j_2 < \cdots <  i_{2k +1} \leq j_{2k+1}$ be an admissible circular sequence which admits a monomial in  ${\rm in}_\prec(I_n)^{\{2\}}$.  To begin with, we will assume that $i_1 < j_1,  i_2 < j_2, \ldots$.  Let $\bfi$ be the sequence of $i$ indices and $\bfj$ the sequence of $j$ indices.  We will construct a polynomial  $f_{\bfi,\bfj}$ in $I_{n}^{\{2\}}$ whose initial term is $\prod_{l = 1}^{2k+1}  x_{i_l j_{l +k-1}}$.

Since the elements of $\bfi$ and $\bfj$ are all distinct, we think about the monomial $\prod_{l = 1}^{2k+1}  x_{i_l j_{l +k-1}}$ as a fixed-point free involution $\prod_{l =1}^{2k+1} (i_l j_{l + k -1})$ in the symmetric group $S_{\bfi, \bfj}$ on the letters $\{i_1, j_1, i_2, j_2, \ldots, \ldots \}$.  In the symmetric group, consider the subgroup $Z_{\bfi, \bfj}$ generated by the $2k+1$ transpositions $(i_l j_{l-1})$.  Note that $Z_{\bfi, \bfj}  \cong \zz_2^{2k+1}$ because each of the $2k+1$ transpositions acts on a disjoint set of indices.

Let the group $Z_{\bfi, \bfj}$ act on $S_{\bfi, \bfj}$ by conjugation.  Since conjugation preserves cycle type, any conjugate of $\prod_{l =1}^{2k+1} (i_l j_{l + k -1})$ is a fixed-point free involution, and corresponds to a monomial in $\cc[x]$ of degree $2k+1$.  Let
$\bfx_{\bfi, \bfj}  = \prod_{l = 1}^{2k+1}  x_{i_l j_{l +k-1}}$
and for $\sigma \in S_{\bfi, \bfj}$ let $\bfx_{\bfi, \bfj}^\sigma$ be the monomial that is obtained from the fixed-point free involution $\sigma \cdot  \left( \prod_{l = 1}^{2k+1}  (i_l j_{l +k-1} ) \right) \cdot \sigma^{-1}$.

\begin{defn}\label{def:master}
Let $\bfi, \bfj$ be an admissible pair of sequences of length $2k+1$.  The master polynomial $f_{\bfi, \bfj}$ is the polynomial
$$f_{\bfi, \bfj}  =  \sum_{\sigma \in Z_{\bfi, \bfj} }   (-1)^\sigma  \bfx_{\bfi, \bfj}^\sigma.$$
\end{defn}

\begin{ex}
The first case where $f_{\bfi, \bfj}$ is nonzero occurs when $k =1$, with $\bfi = (1,3,5)$ and $\bfj = (2,4,6)$.  In this case, the master polynomial is an eight term cubic:
\begin{eqnarray*}
f_{\bfi,\bfj} & = & \, \, \, \, \, \underline{x_{12}x_{34}x_{56} }  \\
  &  &   - x_{12} x_{35} x_{46} -  x_{13}x_{24} x_{56} -  x_{15}x_{26} x_{34}  \\
  &  &    +  x_{13}x_{25}x_{46} + x_{14} x_{26} x_{35} + x_{15}x_{24}x_{36} \\
  &   &  -  x_{14} x_{25} x_{36}
\end{eqnarray*}

For $k =2$, the generic case, where all indices are distinct, yields a polynomial with $32$ terms.   
In the degenerate case where $\bfi = (1,2,3,4,5)$ and $\bfj = (1,2,3,4,5)$, there is extensive cancellation, and the resulting quintic has only twelve nonzero terms:
\begin{eqnarray*}
f_{\bfi, \bfj} & = &   \hspace{10pt}
 \underline{x_{12} x_{15} x_{23} x_{34} x_{45}}
- x_{12} x_{13} x_{25} x_{34} x_{45}
- x_{12} x_{14} x_{23} x_{35} x_{45}
+ x_{12} x_{14} x_{25} x_{34} x_{35} \\
 &  & 
+x_{12} x_{13} x_{24} x_{35} x_{45}
- x_{12} x_{15} x_{24} x_{34} x_{35}
+ x_{13} x_{14} x_{23} x_{25} x_{45}
- x_{13} x_{14} x_{24} x_{25} x_{35} \\
 &  & 
- x_{13} x_{15} x_{23} x_{24} x_{45}
+ x_{13} x_{15} x_{24} x_{25} x_{34}
- x_{14} x_{15} x_{23} x_{25} x_{34}
+ x_{14} x_{15} x_{23} x_{24} x_{35}.
\end{eqnarray*}
In the statistical literature on factor analysis, this degree five polynomial constraint on covariance matrices is known as a \emph{pentad}.  \qed
\end{ex}

To show that the master polynomials belong to the secant ideal $I_n^{\{2\}}$ we exploit the connection between secant ideals and prolongations.

\begin{thm}\cite[Thm 4.1]{Sidman2006}\label{thm:prolong}
Let $f$ be a homogeneous polynomial of degree $r(d-1) +1$ such that
$\frac{\partial^{\bfa} f}{\partial x^\bfa}  \in I$   for all   $\bfa \in \nn^n $ with $ \sum a_i  \leq (r-1)(d-1)$.  Then $f \in I^{\{r\}}$.
\end{thm}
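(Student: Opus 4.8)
The plan is to prove the statement directly from the definition of the secant ideal as an \emph{abstract join}. Recall from \cite{Sturmfels2006} that one introduces $r$ disjoint copies of the variables, say $w_{k1}, \ldots, w_{kn}$ for $k = 1, \ldots, r$, together with the ring homomorphism $\rho$ sending $x_j \mapsto \sum_{k=1}^r w_{kj}$; writing $\tilde I_k$ for the image of $I$ under the substitution $x_j \mapsto w_{kj}$, one has $f \in I^{\{r\}}$ if and only if $\rho(f) \in \tilde I_1 + \cdots + \tilde I_r$. Thus the whole problem reduces to showing that the image $\rho(f) = f(w_1 + \cdots + w_r)$ lies in $\tilde I_1 + \cdots + \tilde I_r$, where $w_k$ abbreviates the $k$th block $(w_{k1}, \ldots, w_{kn})$. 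Since we are over $\cc$, all factorials are invertible and a Taylor/polarization expansion is available.

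The key steps are a multidegree decomposition followed by a pigeonhole count. First I would split $\rho(f)$ into its multihomogeneous components $F_{\bfm}$, where $F_{\bfm}$ collects all terms of degree $m_k$ in the $k$th block for each $k$, and $\bfm = (m_1, \ldots, m_r)$ ranges over all tuples of nonnegative integers with $m_1 + \cdots + m_r = r(d-1)+1$. Because $r(d-1)+1 > r(d-1)$, every such tuple has at least one coordinate $m_k \ge d$, by pigeonhole. It then suffices to show, for each fixed $\bfm$ and a chosen index $k$ with $m_k \ge d$, that the single component $F_{\bfm}$ lies in $\tilde I_k$.

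To establish $F_{\bfm} \in \tilde I_k$, I would freeze the exponents $\bfa_l$ appearing in the blocks $l \ne k$ and examine the resulting polynomial in the $k$th block alone. Setting $\bfc = \sum_{l \ne k} \bfa_l$, so that $|\bfc| = \sum_{l \ne k} m_l = r(d-1)+1 - m_k \le (r-1)(d-1)$, a comparison of multinomial coefficients identifies this frozen piece, up to the invertible scalar $\prod_{l \ne k} \bfa_l!$, with $\partial^{\bfc} f$ evaluated in the $k$th block of variables. By hypothesis $\partial^{\bfc} f \in I$ since $|\bfc| \le (r-1)(d-1)$, so its image in the $k$th block lies in $\tilde I_k$; summing over the frozen exponents gives $F_{\bfm} \in \tilde I_k$. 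Assembling the components then yields $\rho(f) \in \tilde I_1 + \cdots + \tilde I_r$, hence $f \in I^{\{r\}}$.

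The main obstacle is the bookkeeping in the last step: one must check that the coefficient of each monomial $w_1^{\bfa_1} \cdots w_r^{\bfa_r}$ in $\rho(f)$ is exactly $u_{\bfa_1 + \cdots + \bfa_r}\binom{\bfa_1 + \cdots + \bfa_r}{\bfa_1, \ldots, \bfa_r}$, where $f = \sum_{\bfe} u_{\bfe} x^{\bfe}$, and that this factors correctly as $\tfrac{1}{\prod_{l \ne k}\bfa_l!}$ times the corresponding coefficient of $(\partial^{\bfc} f)$ in the $k$th block. The pigeonhole bound $m_k \ge d$ and the degree bound $|\bfc| \le (r-1)(d-1)$ are precisely what make the hypothesis on the low-order derivatives exactly strong enough; the only genuinely essential analytic input is that the characteristic is zero, so that the multinomial factors may be inverted.
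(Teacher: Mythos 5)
Your argument is correct. Note, however, that the paper does not prove this statement at all: it is quoted verbatim as Theorem 4.1 of Sidman--Sullivant \cite{Sidman2006} and used as a black box, so there is no internal proof to compare against. What you have written is essentially the standard polarization proof of the prolongation-to-secant containment, carried out in the join model of \cite{Sturmfels2006}: the reduction of $f \in I^{\{r\}}$ to $f(w_1+\cdots+w_r)\in \tilde I_1+\cdots+\tilde I_r$ is the defining property of the join, the pigeonhole step is exactly right ($\sum_k m_k = r(d-1)+1$ forces some $m_k\ge d$, whence $|\bfc| = r(d-1)+1-m_k \le (r-1)(d-1)$), and the coefficient bookkeeping checks out: the coefficient of $w_1^{\bfa_1}\cdots w_r^{\bfa_r}$ in $\rho(f)$ is $u_{\bfe}\,\bfe!/\prod_k \bfa_k!$ with $\bfe=\sum_k\bfa_k$, while the coefficient of $w_k^{\bfa_k}$ in $(\partial^{\bfc}f)(w_k)$ is $u_{\bfa_k+\bfc}\,(\bfa_k+\bfc)!/\bfa_k!$, so the frozen piece is $\bigl(\prod_{l\ne k}\bfa_l!\bigr)^{-1}(\partial^{\bfc}f)(w_k)\in\tilde I_k$, and since $\tilde I_k$ is extended to the full ring $\cc[w]$ you may freely multiply by the monomials in the other blocks. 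The only hypotheses you genuinely use are characteristic zero (to invert the multinomial factors) and the join characterization itself; no assumption that $I$ is generated in degree $d$ is needed, which matches the generality of the cited statement. In short: correct, self-contained, and a reasonable substitute for the external citation, though in the context of this paper the author's intent was simply to import the result.
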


\begin{lemma}\label{lem:belong}
The master polynomial $f_{\bfi, \bfj}$ is nonzero and belongs to the secant ideal $I_n^{\{2\}}$.
\end{lemma}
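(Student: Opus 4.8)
The plan is to apply Theorem~\ref{thm:prolong} with $r = 2$ and $d = 2$, which is exactly the regime where the hypothesis simplifies dramatically. For $r = d = 2$, the degree requirement $r(d-1)+1 = 3$ is the base case, but for higher-degree master polynomials we need the general statement: a homogeneous polynomial of degree $2k+1 = 2(d-1)+1$ with $d = k+1$ lies in $I_n^{\{2\}}$ provided all partial derivatives $\partial^\bfa f_{\bfi,\bfj}/\partial x^\bfa$ with $|\bfa| \leq d-1 = k$ lie in $I_n$. So the two things I would establish are: first, that $f_{\bfi,\bfj}$ is nonzero; and second, that each such partial derivative lies in the toric ideal $I_n$, which by Theorem~2.2 means it must vanish under the substitution $x_{ab} \mapsto t_a t_b$.

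For nonvanishing, I would argue that the underlined term $\bfx_{\bfi,\bfj} = \prod_{l} x_{i_l j_{l+k-1}}$ is the $\prec$-leading term among all the $\bfx_{\bfi,\bfj}^\sigma$ and appears with coefficient $\pm 1$, so no cancellation can kill it. More carefully, since the summands are indexed by the group $Z_{\bfi,\bfj} \cong \zz_2^{2k+1}$ acting by conjugation, I would check that distinct $\sigma$ can produce the same monomial only in matched $\pm$ pairs or not at all; the cleanest route is to observe that the identity $\sigma$ gives the unique maximal monomial in the circular order, since conjugating by any nontrivial transposition $(i_l j_{l-1})$ swaps endpoints in a way that strictly decreases the product of edge-weights. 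This pins down $\mathrm{in}_\prec(f_{\bfi,\bfj}) = \bfx_{\bfi,\bfj}$, simultaneously proving nonvanishing and identifying the initial term needed for the delightful strategy.

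The heart of the argument is showing the partial derivatives lie in $I_n$, and here is where I would lean on the $\zz_2^{2k+1}$-symmetry of the construction. The key observation is that $f_{\bfi,\bfj}$ is built as a signed sum over conjugates by an abelian $2$-group, so differentiating and then substituting $x_{ab} \mapsto t_at_b$ should telescope. Concretely, under $\phi_n$ each monomial $\bfx_{\bfi,\bfj}^\sigma$ maps to the same product $\prod_l t_{i_l} t_{j_l}$ up to the bookkeeping of which indices are paired — but because every $\sigma \in Z_{\bfi,\bfj}$ is an involution built from transpositions $(i_l j_{l-1})$, the multiset of $t$-variables is preserved, so $\phi_n(\bfx_{\bfi,\bfj}^\sigma)$ is independent of $\sigma$. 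Then $\phi_n(f_{\bfi,\bfj}) = \left(\sum_{\sigma} (-1)^\sigma\right) \cdot (\text{common image}) = 0$ since the signs sum to zero over a nontrivial $\zz_2$-group. The same cancellation structure, suitably refined, should handle each partial derivative: differentiating by $x^\bfa$ selects the subsum of $\sigma$ whose monomials are divisible by $x^\bfa$, and I expect this subsum to again decompose into sign-balanced $Z_{\bfi,\bfj}$-cosets whose $\phi_n$-images agree, forcing the derivative into $\ker \phi_n = I_n$.

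The main obstacle I anticipate is verifying the derivative condition uniformly for \emph{all} $\bfa$ with $|\bfa| \leq k$, rather than just $|\bfa| = 0$. When we differentiate, the resulting polynomial is a signed sum over those $\sigma$ for which $x^\bfa \mid \bfx_{\bfi,\bfj}^\sigma$, and I must check both that this surviving collection of monomials maps to a single common image under $\phi_n$ (so the $t$-monomial is constant across survivors) and that the signs $(-1)^\sigma$ still cancel in sign-balanced pairs. The cancellation of signs should follow from a subgroup/coset argument: the transpositions in $Z_{\bfi,\bfj}$ that fix divisibility by $x^\bfa$ form a coset structure, and since $\bfa$ involves at most $k < 2k+1$ of the edge-variables, at least one generating transposition of $Z_{\bfi,\bfj}$ acts freely on the surviving set, pairing each survivor with an opposite-sign partner of equal $\phi_n$-image. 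Making this coset decomposition precise — identifying exactly which generators of $\zz_2^{2k+1}$ remain ``free'' after imposing divisibility by $x^\bfa$, and confirming one always does when $|\bfa| \leq k$ — is the technical crux, and it is precisely where the degree bound $(r-1)(d-1) = k$ from Theorem~\ref{thm:prolong} must be matched against the rank $2k+1$ of the symmetry group.
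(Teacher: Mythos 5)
Your argument for the case where all the indices $i_1 < j_1 < i_2 < \cdots < j_{2k+1}$ are distinct is essentially the paper's: apply the prolongation theorem with $r=2$, $d=k+1$; observe that every conjugate $\bfx_{\bfi,\bfj}^\sigma$ has the same image under $\phi_n$ because conjugation preserves the underlying vertex set of the matching; and, for a derivative $D_\bfa$ with $|\bfa|\leq k$, locate a generating transposition $(i_l\,j_{l-1})$ none of whose two endpoints meets the $\leq 2k$ vertices touched by $\bfa$, so that conjugation by it pairs the surviving monomials into opposite-sign, equal-$\phi_n$-image binomials lying in $I_n=\ker\phi_n$. That is exactly the paper's mechanism, and your count matching the bound $(r-1)(d-1)=k$ against the $2k+1$ disjoint generators is the right crux. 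Your nonvanishing argument is a mild variant: the paper shows the stabilizer of the base involution is trivial by tracking the cyclic crossing number (conjugation by $m$ distinct generators increases it by exactly $m$), whereas you propose that the identity gives the strictly $\prec$-largest monomial. Your route works but the claim that each nontrivial conjugation strictly decreases the term order is asserted rather than proved; it amounts to checking that each generator replaces a noncrossing pair of edges by a crossing pair on the same four vertices, i.e.\ performs a Gr\"obner reduction, which the paper isolates in Lemma \ref{lem:init}.

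The genuine gap is that your proof only covers the generic case. The lemma is stated for all admissible pairs, which by definition allow $i_l = j_l$ (the pentad, with $\bfi=\bfj=(1,2,3,4,5)$, is exactly such a degenerate instance). In that situation the $2^{2k+1}$ conjugate monomials are no longer distinct, there is extensive cancellation, the monomials need not all be squarefree, and the clean free-transposition pairing on survivors of $D_\bfa$ can collapse; your coset argument as described does not survive this. The paper handles the degenerate case separately: membership in $I_n^{\{2\}}$ follows by specializing parameters ($t_i=t_j$, $u_i=u_j$) in the generic identity $f_{\bfi,\bfj}(x_{ij}=t_it_j+u_iu_j)=0$, and nonvanishing follows by checking that the cyclic crossing number of the lead matching is unchanged under the identification $i_l=j_l$ while every other conjugate still has strictly larger crossing number, so the lead monomial cannot cancel. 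You would need to add an argument of this kind (or rework your term-order argument to survive the identification) to complete the proof.
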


\begin{proof}
We will begin with the case that all the indices in $\bfi, \bfj$ are distinct, and then derive the general case as a consequence.

First of all, we will show that $f_{\bfi, \bfj}$ has $2^{2k+1}$ nonzero terms when the $\bfi, \bfj$ are distinct, and hence is non-zero.  This will be implied by the fact that the stabilizer of $\prod_{l = 1}^{2k+1}  (i_l j_{l +k-1} )$ with respect to the conjugation action of  $Z_{\bfi, \bfj}$ is trivial.  This can be seen by looking at the cyclic crossing numbers of the permutation $\prod_{l = 1}^{2k+1}  (i_l j_{l +k-1} )$ and its $Z_{\bfi, \bfj}$ conjugates.  To explain these cyclic crossing numbers, we cyclically arrange the numbers $i_1, j_1,  i_2, j_2, \ldots$, around a circle.  The cyclic crossing number of a fixed point free involution is the number of pairs of edges that cross in the associated embedded graph.  
The involution $\prod_{l = 1}^{2k+1}  (i_l j_{l +k-1} )$ has crossing number ${ 2k +1 \choose 2}  - ( 2k + 1)$ because it corresponds to a cycle in the noncrossing graph $G_n$.  Now  if $\sigma \in Z_{\bfi, \bfj}$ is the maximal permutation $\sigma =  \prod_{i =1}^{2k+1}  (i_l,j_{l-1})$  then
$$  \sigma \cdot \left(\prod_{l = 1}^{2k+1}  (i_l j_{l +k-1} )  \right) \cdot  \sigma^{-1}  =  \prod_{l = 1}^{2k+1}  (j_l i_{l +k-1} )$$
which has crossing number ${ 2k +1 \choose 2} $ since every pair of edges cross.  Now conjugation of any fixed-point free involution by an outside transposition $(i_l j_{l-1})$ changes the crossing number by at most one.  Thus, if $\sigma \in Z_{\bfi, \bfj}$ is the product of $m$ distinct $(i_l j_{l-1})$,  $  \sigma \cdot \left( \prod_{l = 1}^{2k+1}  (i_l j_{l +k-1} ) \right) \cdot  \sigma^{-1} $ must have crossing number ${2k +1 \choose 2} - (2k+1) + m$.  This implies that the stabilizer is trivial as claimed.

Now we will show that $f_{\bfi, \bfj} \in I_n^{\{2\}}$ using Theorem \ref{thm:prolong}.  Since $f_{\bfi, \bfj}$ has degree $2k+1$, we must show that all partial derivatives of $f_{\bfi, \bfj}$ up to order $k$ belong to $I_n$.  In the situation where $\bfi, \bfj$ are all distinct, every monomial in $f_{\bfi, \bfj}$ is squarefree, so we only need to consider the squarefree differential operators $D_\bfa = \frac{\partial^\bfa}{\partial \bfx^\bfa}$  where $\bfa$ is a $(0,1)$-vector.  This is given by a set of $\leq k$ edges in the complete graph $K_{\bfi, \bfj}$ with $4k+2$ vertices ${\bfi, \bfj}$.  Since there are only $\leq k$ edges, at least one of the edges $(i_l, j_{l-1})$ is not incident to any of these edges.  Now suppose that $D_\bfa f_{\bfi, \bfj}$ is not the zero polynomial.  Since $f_{\bfi, \bfj}$ is the signed sum of squarefree monomials, so also is $D_\bfa f_{\bfi, \bfj}$.  As $(i_l, j_{l-1})$ does not involve the variable set under differentiation by $D_\bfa$, $(i_l, j_{l-1})$ acts as an involution without fixed points on the set of monomials appearing in $D_\bfa f_{\bfi, \bfj}$, which yields a pairing of the monomials.  Since a monomial $m$ and its conjugate $m^{(i_l, j_{l-1})}$ will have oppositely signed coefficients,  this provides a decomposition of $D_\bfa f_{\bfi, \bfj}$ into a sum of binomials $D_\bfa f_{\bfi, \bfj} = \sum (\bfx^\bfu  - \bfx^\bfv)$ and each binomial belongs to the toric ideal $I_n$, because the set of indices appearing in $\bfx^\bfu$ and $\bfx^\bfv$ is the same.  This implies that 
$f_{\bfi, \bfj} \in I_n^{\{2\}}$.

Finally, we need to extend the result to the degenerate situation where for some values of $l$, $i_l = j_l$.  First of all, the fact that $f_{\bfi, \bfj} \in I_n^{\{2\}}$ follow just by ``identifying parameters''.  If we think about the $\bfi$ and $\bfj$ as being generic indices, $f_{\bfi, \bfj} \in I_n^{\{2\}}$ if and only if $f_{\bfi, \bfj}( x_{ij}   =t_i t_j + u_i u_j)  = 0$ for all vectors $t$ and $u$.  In particular, this holds if some of the $t_i = t_ j$ and $u_i = u_j$, which is what happens when we move from the generic indices to degenerate indices.  To show that, $f_{\bfi, \bfj}$ is nonzero in the degenerate case, we go back to counting cyclic crossings.  Note that setting some $i_l = j_l$ for some values of $l$ cannot decrease the cyclic crossing number of the resulting graph and the crossing number might increase.  However, for the graph corresponding to the monomial  $\prod_{l = 1}^{2k+1}  x_{i_l j_{l +k-1}}$  the cyclic crossing number remains the same, even if some $i_l = j_l$.  This implies that this particular monomial does not cancel with any other monomial in the representation, so $f_{\bfi, \bfj}$ is not zero.
\end{proof}

\begin{lemma}\label{lem:init}
The leading term of the master polynomial $f_{\bfi, \bfj}$ with respect to any circular term order is the cycle monomial $\bfx_{\bfi, \bfj}  =  \prod_{l = 1}^{2k+1}  x_{i_l j_{l +k-1}}$.
\end{lemma}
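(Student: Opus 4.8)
The plan is to show that among all the monomials $\bfx_{\bfi,\bfj}^\sigma$ appearing in $f_{\bfi,\bfj}$, the distinguished cycle monomial $\bfx_{\bfi,\bfj} = \prod_{l=1}^{2k+1} x_{i_l j_{l+k-1}}$ is strictly largest with respect to $\prec$. Since each monomial corresponds to a fixed-point-free involution obtained by conjugating $\prod (i_l j_{l+k-1})$ by some $\sigma \in Z_{\bfi,\bfj}$, and I have a bijection (in the generic case) between monomials and the $2^{2k+1}$ elements of $Z_{\bfi,\bfj}$, the task reduces to comparing these monomials combinatorially. My first move is to recall the key data point already established in Lemma \ref{lem:belong}: the cycle monomial $\bfx_{\bfi,\bfj}$ has the \emph{minimal} cyclic crossing number ${2k+1 \choose 2} - (2k+1)$ among all the conjugates, and conjugating by $m$ of the transpositions $(i_l j_{l-1})$ produces a monomial of crossing number exactly ${2k+1 \choose 2} - (2k+1) + m$. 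So $\bfx_{\bfi,\bfj}$ is the unique monomial whose edge-set is ``as noncrossing as possible.''

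The next step is to connect crossing number to the circular term order. Since $\prec$ refines the block order by \emph{class} (proximity of an edge to the polygon boundary), and noncrossing edges sit near the boundary while deep diagonals cross many edges, I expect the edges of the cycle monomial to lie in larger classes than the edges produced by any conjugation. Concretely, conjugating $(i_l j_{l+k-1})$ by $(i_l j_{l-1})$ replaces an edge $i_l j_{l+k-1}$ by $j_{l-1} j_{l+k-1}$ (and symmetrically on the other endpoint), and I would track precisely how each such swap moves edges to a deeper class, hence to a $\prec$-smaller variable. The heart of the argument is a monotonicity claim: applying any nontrivial $\sigma \in Z_{\bfi,\bfj}$ strictly decreases the monomial in $\prec$. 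I would try to prove this by showing that a single outer transposition, applied to any conjugate, strictly lowers the monomial, so that the cycle monomial dominates by induction on the number of transpositions $m$.

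The main obstacle I anticipate is that the block term order $\prec$ orders edges only by class, and it does not immediately care about crossing number per se; two edges in the same class are incomparable at the block level, with the tie broken arbitrarily by the refining order. So a single conjugation step could move one edge to a smaller class while moving a partner edge to a larger class, and I need to verify the \emph{net} effect on the product monomial is a decrease in $\prec$. To handle this I would set up the comparison as a lexicographic comparison of the multisets of edge-classes: I claim the cycle monomial's multiset of edge classes is lexicographically largest (most weight near the boundary), and that each conjugation step can only shift classes inward. If the class multiset strictly drops, we are done regardless of the refinement; if it stays equal, I must rule this out, which is where the crossing-number bookkeeping from Lemma \ref{lem:belong} does the real work, since equal class multisets with distinct edge sets would contradict the strict crossing-number increase of $m$.

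Finally, I would dispatch the degenerate case $i_l = j_l$ exactly as in Lemma \ref{lem:belong}: the cycle monomial is precisely the one whose cyclic crossing number is preserved under the degeneration, so it does not cancel and remains the leading term, while the identification of parameters can only merge or kill other terms without promoting any of them above $\bfx_{\bfi,\bfj}$. Thus the leading term is the cycle monomial in every case.
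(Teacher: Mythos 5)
There is a genuine gap at the heart of your argument: the monotonicity claim that a single outer transposition strictly lowers the monomial is exactly what needs to be proved, and the machinery you propose does not establish it. First, the assertion that ``each conjugation step can only shift classes inward'' is false: conjugation replaces the two edges meeting $i_l$ and $j_{l-1}$ by the crossing pair on the same four vertices, and this can move an edge to a \emph{shallower} class. For instance, in $K_{10}$ the noncrossing pair $\{1,6\},\{7,10\}$ (classes $6$ and $4$) becomes the crossing pair $\{6,10\},\{1,7\}$ (both class $5$), so one edge moves outward. Second, the cyclic crossing number has no a priori relation to $\prec$: nothing in the definition of a circular term order says that a monomial of larger crossing number is smaller, so invoking the crossing-number bookkeeping from Lemma \ref{lem:belong} to break ties between equal class multisets proves nothing about the term order. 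Third, even the reduction of ``largest in any circular term order'' to ``lexicographically largest class multiset'' is unjustified, since a block order may compare the submonomials within a block by an arbitrary term order rather than by degree. So the comparison you set up never actually gets off the ground.

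The missing idea that closes the gap cleanly is to recognize that each conjugation by $(i_l\, j_{l-1})$ is precisely a noncrossing-to-crossing swap on four vertices, i.e.\ a one-step reduction of the monomial modulo one of the quadratic binomials in the De Loera--Sturmfels--Thomas Gr\"obner basis of $I_n$ from Section 2, whose leading term is the noncrossing pair for \emph{every} circular term order. A reduction step strictly decreases a monomial, so every $\bfx_{\bfi,\bfj}^{\sigma}$ with $\sigma \neq e$ is strictly $\prec$-smaller than $\bfx_{\bfi,\bfj}$; this is the paper's second argument and it subsumes all of your class/crossing bookkeeping. (The paper also gives a slicker alternative: since $f_{\bfi,\bfj}\in I_n^{\{2\}}$, its leading term must lie in ${\rm in}_\prec(I_n)^{\{2\}}$ by Proposition \ref{prop:delight}, and $\bfx_{\bfi,\bfj}$ is the only monomial of $f_{\bfi,\bfj}$ divisible by an odd-cycle generator of that ideal, because conjugation destroys edges of the cycle in the noncrossing graph.) Your treatment of the degenerate case inherits the same problem and also glosses over the fact that identifying indices changes the ambient variables, so ``was smaller before identification'' does not transfer; the reduction-step argument, applied directly to the degenerate monomials, avoids this issue as well.
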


\begin{proof}
The monomial $\bfx_{\bfi, \bfj}$ is the only monomial appearing in $f_{\bfi, \bfj}$ that is divisible by one of the odd cycle generators of ${\rm in}_\prec(I_n)^{\{2\}}$.  Indeed, applying a nonzero element $\sigma \in Z_{\bfi, \bfj}$ removes edges from the noncrossing graph of this involution.   Since $f_{\bfi, \bfj} \in I_n^{\{2\}}$, and we must have the containments ${\rm in}_\prec(I_n^{\{2\}}) \subseteq {\rm in}_\prec(I_n)^{\{2\}}$, this implies $\bfx_{\bfi, \bfj}$ is the initial term.

Another argument is that every monomial appearing in $f_{\bfi,\bfj}$ can be obtained from $\bfx_{\bfi, \bfj}$ by applying some sequence of quadratic reductions from the Gr\"obner basis for the second hypersimplex.  Since such any such single reduction takes a monomial and produces a monomial that is smaller in the term order, this implies that $\bfx_{\bfi, \bfj}$ is the leading monomial.
\end{proof}

\noindent {\em Proof of Theorem  \ref{thm:delight}.}  We employ the delightful strategy to determine the Gr\"obner basis for $I_n^{\{2\}}$.  In particular, to show that the circular term order $\prec$ is $2$-delightful, we must show that each of the monomials in ${\rm in}_\prec(I_n)^{\{2\}}$, that were determined in Corollary  \ref{cor:secantinitial} is the initial term of some polynomial in $I_n^{\{2\}}$.  

By Lemmas \ref{lem:belong} and \ref{lem:init}, each admissible sequence of odd length $5$ or greater produces a master polynomial $f_{\bfi,\bfj}$ whose initial term is the corresponding odd cycle in the noncrossing graph $G_n$.  Also, admissible sequences of length $3$ produce master polynomials of degree $3$ whose initial terms of the monomials of form $x_{i_1i_2} x_{i_3i_4} x_{i_5i_6}$.  So to finish the proof, we need only show that each degree $3$ monomial of the form  $x_{i_1i_6} x_{i_2i_5} x_{i_3i_4}$ is the initial monomial of some $3 \times 3$ off diagonal minor.  So, consider the off diagonal minor
$$X =  \begin{pmatrix}
x_{i_1i_4} & x_{i_1i_5} & x_{i_1i_6} \\
x_{i_2i_4} & x_{i_2i_5} & x_{i_2i_6} \\
x_{i_3i_4} & x_{i_3i_5} & x_{i_3i_6}
\end{pmatrix}$$
whose antidiagonal term is the desired monomial.  This off-diagonal minor clearly belongs to $I_n^{\{2\}}$ since it vanishes on rank two symmetric matrices.  Furthermore, the term $x_{i_1i_6}x_{i_2i_5} x_{i_3i_4}$ is its initial term, since each of the five other terms can be obtained from it by applying a sequence of non-crossing to crossing moves.  These non-crossing to crossing moves send a monomial to something smaller in the term order, because they amount to Gr\"obner reduction with respect to the circular Gr\"obner basis for $I_n$.  \qed

\smallskip

Finally, we are able to apply the results about the secant ideal $I_n^{\{2\}}$ and its delightful Gr\"obner basis to also deduce the Gr\"obner basis for the symbolic power $I_n^{(2)}$.

\begin{cor}
The set of $3 \times 3$ off-diagonal minors, the degree $3$ master polynomials, and the products of pairs of $2 \times 2$ off-diagonal minors form a Gr\"obner basis for $I_n^{(2)}$ with respect to any circular term order.  Furthermore  $I_n^{(2)}  =  I_n^{2} + I_n^{\{2\}}$.
\end{cor}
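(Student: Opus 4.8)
The plan is to prove both assertions of the corollary simultaneously, leveraging the delightful strategy now available for symbolic powers via part (2) of Proposition \ref{prop:delight}. The central identity to establish is $I_n^{(2)} = I_n^2 + I_n^{\{2\}}$, and I would derive the Gr\"obner basis statement as its consequence. First I would invoke the theorem of \cite{Sullivant2007} (stated just before Corollary \ref{cor:symbinitial}), which asserts that for \emph{any} graph $G$ one has $I(G)^{(2)} = I(G)^2 + I(G)^{\{2\}}$. Applying this to the edge ideal $G = G_n$, whose edge ideal is ${\rm in}_\prec(I_n)$, gives the combinatorial description in Corollary \ref{cor:symbinitial}: the initial ideal ${\rm in}_\prec(I_n)^{(2)}$ is generated by noncrossing triples and products of pairs of noncrossing edge pairs.

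The key step is to produce, for each minimal generator of ${\rm in}_\prec(I_n)^{(2)}$, an explicit polynomial in $I_n^{(2)}$ having that monomial as its leading term. I would split into the two families identified in Corollary \ref{cor:symbinitial}. For the noncrossing triples (degree three monomials $x_ix_jx_k$ where $i,j,k$ form a triangle in $G_n$), these are exactly the degree three odd-cycle generators, so they are already covered: by the proof of Theorem \ref{thm:delight}, each such monomial is the leading term either of a degree three master polynomial $f_{\bfi,\bfj}$ or of a $3 \times 3$ off-diagonal minor, and both of these polynomials lie in $I_n^{\{2\}} \subseteq I_n^{(2)}$. For the degree four generators $x_ix_jx_kx_l$ coming from pairs of noncrossing edges, I would write each such monomial as a product of two quadratic leading terms from the circular Gr\"obner basis of $I_n$ (Theorem of \cite{Deloera1995}); the product of the two corresponding binomials is a quartic in $I_n^2 \subseteq I_n^{(2)}$ whose leading term, under a block/multiplicative property of the circular term order, is precisely the product of the two noncrossing pairs. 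Since each noncrossing pair is a leading term of a binomial in the Gr\"obner basis, multiplicativity of initial terms gives the desired leading monomial. Concretely, a pair of noncrossing edges either shares a common structure handled by one Koszul-type product of the binomial generators, and I would check the leading term computation case by case against the enumeration of noncrossing edge-pairs in $K_6$ (as in the degree three analysis).

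Having exhibited polynomials $\mathcal{G}$ in $I_n^{(2)}$ whose leading terms generate ${\rm in}_\prec(I_n)^{(2)}$, I would conclude via the delightful mechanism of Proposition \ref{prop:delight}(2): the containment ${\rm in}_\prec(I_n^{(2)}) \subseteq {\rm in}_\prec(I_n)^{(2)}$ holds because $I_n$ and ${\rm in}_\prec(I_n)$ are radical over $\cc$ (the latter is squarefree, the former is a prime toric ideal); but the reverse containment follows since $\langle {\rm in}_\prec(g) \mid g \in \mathcal{G}\rangle = {\rm in}_\prec(I_n)^{(2)}$ forces equality, so $\mathcal{G}$ is a Gr\"obner basis for $I_n^{(2)}$. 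The same equality of initial ideals, combined with $I_n^2 + I_n^{\{2\}} \subseteq I_n^{(2)}$, upgrades the containment to the stated equality $I_n^{(2)} = I_n^2 + I_n^{\{2\}}$: the two sides share the same initial ideal with respect to $\prec$, so they coincide. The main obstacle I anticipate is the degree four case, namely verifying that the product of two Gr\"obner binomials for a noncrossing edge pair has the expected leading monomial without the two factors' lower-order terms conspiring to produce an even larger monomial; this is where I would need the circular term order's block structure (Definition of $\prec$) to guarantee multiplicativity of the relevant leading terms, rather than any delicate cancellation argument.
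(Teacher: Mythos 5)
Your proposal is correct and follows essentially the same route as the paper: describe ${\rm in}_\prec(I_n)^{(2)}$ via Corollary \ref{cor:symbinitial}, exhibit the degree three master polynomials, $3\times 3$ off-diagonal minors, and products of $2\times 2$ off-diagonal minors as elements of $I_n^2 + I_n^{\{2\}} \subseteq I_n^{(2)}$ whose leading terms generate that monomial ideal, and conclude by the delightful mechanism (with the equality $I_n^{(2)} = I_n^2 + I_n^{\{2\}}$ following because both sides have the same initial ideal). The only remark worth making is that your anticipated obstacle in the degree four case is not a real one: for any term order the initial term of a product of polynomials is the product of their initial terms, so no appeal to the block structure of $\prec$ is needed there.
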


\begin{proof}
By Corollary \ref{cor:symbinitial}, the symbolic power of the initial ideal is ${\rm in}_\prec(I_n)^{(2)} =
{\rm in}_\prec(I_n)^{2}  + {\rm in}_\prec(I_n)^{\{2\}}$, which is generated by length three cycles in the noncrossing graph plus products of pairs of noncrossing edges.  Each length three cycle is the initial term of either a degree three master polynomial or a $3 \times 3$ off-diagonal minor.  Products of pairs of noncrossing edges are the initial terms of products of $2 \times 2$ minors.  All these polynomials belong to $I_n^{(2)}$ by the containment $I^2 + I^{\{2\}} \subseteq I^{(2)}$ (see, for example, \cite{Sullivant2007}).
This implies that ${\rm in}_\prec(I_n)^{(2)} = {\rm in}_\prec(I_n^{(2)}) $ and hence that the desired polynomials form a Gr\"obner basis for $I_n^{(2)}$.  
\end{proof}


\end{document}